\theoremstyle{plain}
\newtheorem{theorem}{Theorem}
\newtheorem{lemma}[theorem]{Lemma}
\theoremstyle{definition}
\newtheorem{definition}[theorem]{Definition}
\theoremstyle{remark}
\newtheorem{remark}[theorem]{Remark}
\title{On the Existence of Balanced Generalized de Bruijn Sequences}
\author{
  Matthew Baker \\
  School of Mathematics\\
  Georgia Institute of Technology\\
  Atlanta, GA 30332\\
  \texttt{mbaker@math.gatech.edu}
  \And
  Bhumika Mittal \\
  Ashoka University\\
  Sonipat, Haryana 131029 \\
  \texttt{bhumika.mittal$\_$ug24@ashoka.edu.in} \\
  \And
  Haran Mouli \\
  PSBB KKN\\
  Chennai, Tamil Nadu 600078 \\
  \texttt{mouliharan@gmail.com} 
  \And
  Eric Tang \\
  Harvard University\\
  Cambridge, MA 02138 \\
  \texttt{etang@college.harvard.edu} \\
  %% examples of more authors
   
 \thanks{The second, third, and fourth authors completed this research under the direction of the first author as part of the summer 2021 PROMYS program. We thank the PROMYS program for their support and for creating such a stimulating work environment. We also thank Persi Diaconis for helpful feedback on an earlier draft. The first author was supported by a Simons Foundation Collaboration Grant.}

  %% \AND
  %% Coauthor \\
  %% Affiliation \\
  %% Address \\
  %% \texttt{email} \\
  %% \And
  %% Coauthor \\
  %% Affiliation \\
  %% Address \\
  %% \texttt{email} \\
  %% \And
  %% Coauthor \\
  %% Affiliation \\
  %% Address \\
  %% \texttt{email} \\
}
\begin{document}
\maketitle
\begin{abstract}
%Input expository part of abstract
A balanced generalized de Bruijn sequence with parameters $(n,l,k)$ is a cyclic sequence of $n$ bits such that (a) the number of 0's equals the number of 1's, and (b) each substring of length $l$ occurs at most $k$ times. 
We determine necessary and sufficient conditions on $n,l$, and $k$ for the existence of such a sequence.
% for the existence of a balanced generalized de Bruijn sequence with parameters $(n,l,k)$.
\end{abstract}

\section{Statement of the main theorem}

De Bruijn sequences, named after Nicolaas Govert de Bruijn (who first wrote about them in 1946) but first explored systematically
by Camille Flye Sainte-Marie in 1894, are well-studied in mathematical literature. A (binary) \textbf{de Bruijn sequence of order $m$} is a cyclic sequence where every possible $m$-bit substring occurs exactly once. It is well known (\cite{deBruijn}, see also \cite{Hall}) that there are $2^{2^{m-1} - m}$ distinct de Bruijn sequences of order $m$.
In this paper, we generalize de Bruijn sequences in the following manner:

\medskip

\begin{definition} 
A \textbf{generalized de Bruijn sequence} with parameters $(n,l,k)$ is a cyclic sequence of $n$ bits such that each substring of length $l$ occurs at most $k$ times. Such a sequence is called {\em balanced} if the number of $0$s and $1$s are equal.
\end{definition}

In our terminology, a (classical) de Bruijn sequence of order $m$ is a generalized de Bruijn sequence with parameters $(2^m,m,1)$. Such a sequence is automatically balanced.

The main result of this paper characterizes those parameters $(n,l,k)$ for which a balanced generalized de Bruijn sequence with parameters $(n,l,k)$ exists:

\medskip

\begin{theorem} \label{maintheorem}
Given positive integers $n,l$, and $k$, a balanced generalized de Bruijn sequence with parameters $(n,l,k)$ exists if and only if $n$ is even and $k \geqslant \frac{n}{2^l}$.
\end{theorem}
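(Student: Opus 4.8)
\emph{Necessity} is immediate: balance forces $n$ to be even, and the $n$ cyclic substrings of length $l$ take at most $2^l$ distinct values, each occurring at most $k$ times, so $n\leqslant k\cdot 2^l$.

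For \emph{sufficiency} I would pass to the de Bruijn graph. Let $D_m$ be the digraph on $\{0,1\}^m$ with an edge $b_1\cdots b_{m+1}$ from $b_1\cdots b_m$ to $b_2\cdots b_{m+1}$; it is $2$-in, $2$-out regular, with $2^{m}$ vertices and $2^{m+1}$ edges, half of which end in the bit $1$. A cyclic binary word of length $N$ is the same thing as a closed walk of length $N$ in $D_{l-1}$; the number of occurrences of a given length-$l$ word equals the multiplicity of the corresponding edge, and — since the symbol emitted at a step is the last bit of the edge just traversed — the number of $1$'s in the word equals the number of traversals of edges ending in $1$. So the task becomes: produce a nonnegative integer circulation $(m_e)$ on $D_{l-1}$ with $\sum_e m_e=n$, $\sum_{e\text{ ends in }1}m_e=n/2$, $m_e\leqslant k$ for all $e$, and connected support; an Eulerian circuit of the associated multigraph is then the desired sequence. (The case $l=1$, where $D_0$ is a single vertex with two loops, is trivial.)

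When $2^l\mid n$ the uniform circulation $m_e=n/2^l$ works, as $n/2^l\leqslant k$; this already includes the extreme case $n=k\cdot 2^l$, in which every length-$l$ word must appear exactly $k$ times. For general $n\geqslant 2^l$ write $n=c\,2^l+r$ with $c=\lfloor n/2^l\rfloor\geqslant1$ and $0<r<2^l$, so that $r$ and $r':=2^l-r$ are both even and $\min(r,r')\leqslant 2^{l-1}$. If $r\leqslant2^{l-1}$ I add one copy of a \emph{balanced simple cycle of length $r$} in $D_{l-1}$ to the uniform circulation of value $c$; if instead $r'\leqslant2^{l-1}$ I delete one such cycle of length $r'$ from the uniform circulation of value $c+1$. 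In either case the outcome is a circulation taking values in $\{c,c+1\}$, with total mass $n$, satisfying the balance condition, with every value $\geqslant c\geqslant1$ (hence connected support), and with $m_e\leqslant c+1=\lceil n/2^l\rceil\leqslant k$. For $n<2^l$ one works instead in $D_l$: a cyclic word of length $n$ with all length-$l$ windows distinct is precisely a simple cycle of length $n$ in $D_l$, and balance is the requirement that $n/2$ of its $n$ vertices begin with $1$.

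Everything thus reduces to one combinatorial statement, which I expect to be the main obstacle: for every $m\geqslant1$ and every even $\rho$ with $2\leqslant\rho\leqslant2^m$ there is a cyclic binary word of length $\rho$ with exactly $\rho/2$ ones all of whose length-$m$ windows are distinct — equivalently, $D_m$ has a simple cycle of length $\rho$, half of whose edges end in $1$ (equivalently, half of whose vertices begin with $1$). The extremes $\rho=2$ (the alternating word) and $\rho=2^m$ (any de Bruijn sequence of order $m$, which is automatically balanced) are classical. For the general case my plan is the cycle-joining method: the pure cycling register decomposes $D_m$ into vertex-disjoint cycles — the rotation orbits of $\{0,1\}^m$ — and it is well known that their adjacency graph (orbits linked by conjugate pairs $u\leftrightarrow\hat u$, where $\hat u$ flips the first bit of $u$) is connected, so that any spanning set of joins on a sub-collection of orbits fuses it into one simple cycle on the union of those orbits. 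Since complementation is an automorphism of $D_m$ commuting with rotation, the orbits come in complementary pairs together with a few complementation-invariant orbits (which are automatically balanced and of even size — the orbits of $0^m$ and $1^m$ forming one such block, of size $2$); hence any complementation-symmetric sub-collection of orbits is balanced, and it suffices to exhibit one of total size $\rho$ that is connected in the adjacency graph. The two points needing genuine care are (i) that the orbit sizes realize every even $\rho\in[2,2^m]$ as the size of such a symmetric sub-collection, the size-$2$ block of fixed points supplying the needed fine adjustment, and (ii) that the sub-collection can be taken connected, which I would arrange by adjoining one complementary pair at a time, each adjacent to what has already been chosen. Granting this lemma, the reductions above yield a balanced generalized de Bruijn sequence for every $(n,l,k)$ with $n$ even and $k\geqslant n/2^l$.
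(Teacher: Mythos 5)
Your reduction is sound and even elegant: the circulation framework, the decomposition ``uniform circulation of value $c$ plus or minus one balanced simple cycle of length $\min(r,2^l-r)\leqslant 2^{l-1}$,'' and the separate treatment of $n<2^l$ correctly reduce everything to your stated lemma, which is true and is essentially the paper's Lemma~\ref{nin} combined with Lemma~\ref{cycle} (a balanced simple cycle of every even length $\rho\leqslant 2^m$ in $D_m=G_{m+1}$). This packaging genuinely differs from the paper's induction on $k$ by concatenating a full de Bruijn sequence, and it buys a cleaner global picture. But the step you yourself flag as the main obstacle is where the proof breaks: point (i) of your cycle-joining plan is false. The rotation orbits of $\{0,1\}^m$ have sizes dividing $m$, so for $m=3$ the orbits have sizes $1,1,3,3$ and the complementation-symmetric blocks have sizes $2$ and $6$; no union of blocks has size $\rho=4$, and the size-$2$ block of $0^m,1^m$ cannot bridge the gap. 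Worse, for $m=5$ every block other than $\{00000,11111\}$ has size $10$, so the achievable sizes are only $2,10,12,20,22,30,32$. Dropping the symmetry requirement does not help: for $m=3$, $\rho=4$ the only balanced unions of orbits are $\{000\}\cup\{011,110,101\}$ and $\{111\}\cup\{001,010,100\}$, and in each the singleton orbit is conjugate-adjacent only to the \emph{other} $3$-orbit, so neither union is connected in the adjacency graph and cycle joining cannot fuse it. Indeed the balanced $4$-cycle that does exist in $D_3$ (the word $0011$, with vertex set $\{001,011,110,100\}$) is not a union of rotation orbits at all, so no argument that only assembles whole pure-cycling-register orbits can ever find it.

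So the lemma needs a different proof. The paper proves it by induction on $m$: a balanced circuit of even length $n\leqslant 2^l$ in $G_l$ lifts to a balanced simple cycle in $G_{l+1}$ (the line-graph correspondence you are implicitly using), and for $2^l<n<2^{l+1}$ one takes the complement $G_{l+1}-H$ of a balanced cycle $H$ of length $2^{l+1}-n$, observes that each component is Eulerian and the whole is balanced, and then repeatedly performs a two-edge swap across a conjugate pair (exchanging $e_1,e_2$ for $e,e'$, which preserves all degrees and the red/blue count by Lemma~\ref{redblue}) to merge components into a single balanced circuit. You would need to import an argument of this kind, or otherwise replace your orbit decomposition by one fine enough to hit every even $\rho$; as it stands the existence of balanced cycles of all even lengths $\rho$ with $2<\rho<2^m$ remains unproved in your write-up.
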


In Section~\ref{sec:motivation}, we discuss our original motivation for introducing balanced generalized de Bruijn sequences, which came from the world of mathematical card tricks.

\section{Proof of the main theorem}

Our proof of Theorem~\ref{maintheorem} used ideas from graph theory. Specifically, we will be concerned with certain subgraphs of the following family of directed graphs:

\begin{definition}
For $l \geqslant 2$, the \textbf{de Bruijn graph} of rank $l$, denoted by $G_l$, is a directed graph on $2^{l-1}$ vertices, labeled by the elements of $\{0,1\}^{l-1}$, and $2^l$ edges, labeled by the elements of $\{0,1\}^l$. For any $l$ bits $b_0,b_1,\cdots,b_{l-1}$, the edge $e=b_0b_1\cdots b_{l-1}$ connects $v_1 = b_0b_1\cdots b_{l-2}$ to $v_2 = b_1b_2\cdots b_{l-1}$.
\end{definition}

We begin with some simple lemmas.

\medskip

\begin{lemma}\label{inoutdeg}
Every vertex in the de Bruijn graph $G_l$ has both in-degree and out-degree $2$.
\end{lemma}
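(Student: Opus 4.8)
The plan is to argue directly from the definition of $G_l$ by counting the edges incident to a fixed vertex. Fix a vertex $v$ of $G_l$, say $v = b_0 b_1 \cdots b_{l-2}$ for some bits $b_0, \ldots, b_{l-2}$. By definition, an edge $e = c_0 c_1 \cdots c_{l-1}$ has $v$ as its tail precisely when $c_0 c_1 \cdots c_{l-2} = b_0 b_1 \cdots b_{l-2}$, i.e.\ when $c_i = b_i$ for $0 \leqslant i \leqslant l-2$; this leaves $c_{l-1}$ completely unconstrained. Hence the edges leaving $v$ are exactly $b_0 \cdots b_{l-2}0$ and $b_0 \cdots b_{l-2}1$, which are distinct since their last bits differ, so $v$ has out-degree $2$.

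Symmetrically, $e = c_0 c_1 \cdots c_{l-1}$ has $v$ as its head precisely when $c_1 c_2 \cdots c_{l-1} = b_0 b_1 \cdots b_{l-2}$, i.e.\ when $c_i = b_{i-1}$ for $1 \leqslant i \leqslant l-1$, leaving $c_0$ unconstrained. Thus the edges entering $v$ are exactly $0 b_0 \cdots b_{l-2}$ and $1 b_0 \cdots b_{l-2}$, again distinct, so $v$ has in-degree $2$. Since $v$ was arbitrary, this proves the lemma; as a sanity check, summing over all $2^{l-1}$ vertices gives $2 \cdot 2^{l-1} = 2^l$ edges, matching the definition of $G_l$.

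There is no substantive obstacle here; the only thing requiring care is bookkeeping with the index conventions in the definition of an edge — that the first $l-1$ symbols of the label determine the tail and the last $l-1$ symbols determine the head — and observing in each case that the one remaining free bit yields two genuinely different edges.
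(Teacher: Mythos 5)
Your proof is correct and follows essentially the same approach as the paper: fix a vertex $v = b_0\cdots b_{l-2}$ and read off from the definition that its out-edges are $b_0\cdots b_{l-2}b$ and its in-edges are $b\, b_0\cdots b_{l-2}$ for $b \in \{0,1\}$. The extra bookkeeping and the edge-count sanity check are fine but not needed beyond what the paper's one-line inspection already gives.
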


\begin{proof}
Consider the vertex $v = b_0b_1\cdots b_{l-2} \in V(G_l)$. By inspection, $v$ has in-edges $b b_0b_1\cdots b_{l-2}$ and out-edges $b_0b_1\cdots b_{l-2}b$ where $b \in \{0,1\}$. Thus, $v$ has both in-degree and out-degree $2$.
\end{proof}

\medskip

\begin{lemma}
The de Bruijn graph $G_l$ is Eulerian.
\end{lemma}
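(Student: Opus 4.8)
The plan is to invoke the standard characterization of Eulerian directed graphs: a directed graph admits a closed Eulerian circuit if and only if every vertex has in-degree equal to out-degree and the subgraph spanned by the vertices of nonzero degree is strongly connected. Lemma~\ref{inoutdeg} already supplies the degree condition (indeed every vertex has in-degree and out-degree exactly $2$, so in particular no vertex is isolated), so the only remaining task is to verify that $G_l$ is strongly connected.

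To prove strong connectivity, I would show that between any two vertices there is a directed path of length exactly $l-1$. Given vertices $u = a_0 a_1 \cdots a_{l-2}$ and $v = b_0 b_1 \cdots b_{l-2}$, consider the sequence of vertices obtained by successively ``shifting in'' the bits of $v$: set $w_0 = u$, and let $w_{i+1}$ be obtained from $w_i$ by deleting its first bit and appending $b_i$. By the description of edges in $G_l$, if $w_i = c_0 c_1 \cdots c_{l-2}$ then $w_i$ and $w_{i+1} = c_1 c_2 \cdots c_{l-2} b_i$ are joined by the edge labelled $c_0 c_1 \cdots c_{l-2} b_i$; such an edge exists regardless of the value of $b_i$, since each vertex has out-degree $2$. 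After $l-1$ steps the string $w_{l-1}$ is precisely $b_0 b_1 \cdots b_{l-2} = v$, so this construction produces a directed path from $u$ to $v$.

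Since $u$ and $v$ were arbitrary, $G_l$ is strongly connected, and combined with Lemma~\ref{inoutdeg} the Euler circuit criterion for directed graphs shows that $G_l$ is Eulerian. I do not anticipate any genuine obstacle here; the only point requiring a little care is to invoke the correct hypothesis in the directed Euler criterion, namely \emph{strong} connectivity of $G_l$ rather than mere connectivity of its underlying undirected graph.
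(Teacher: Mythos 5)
Your proposal is correct and follows essentially the same route as the paper: both combine the degree condition from Lemma~\ref{inoutdeg} with the directed Euler criterion, and both establish connectivity by the same ``shift in the bits of the target vertex'' walk of length $l-1$. Your explicit remark that the criterion requires \emph{strong} connectivity is a slightly more careful phrasing than the paper's, but the directed walk exhibited is identical in both arguments.
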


\begin{proof}
We know from Lemma \ref{inoutdeg} that every vertex in $G_l$ has equal in-degree and out-degree. Thus, by a well-known criterion for the existence of Eulerian circuits in digraphs (see for example~\cite[Theorem 13.1.2 ]{Brualdi}) it suffices to show that $G_l$ is connected. This follows from the fact that given vertices $v = b_0b_1\cdots b_{l-2}$ and $v' = b_0'b_1'\cdots b_{l-2}'$, the following is a walk from $v$ to $v'$:
\[b_0b_1\cdots b_{l-2} \to b_1\cdots b_{l-2}b_0' \to \cdots \to b_0'b_1'\cdots b_{l-2}' .\]
% Thus, $G_l$ is Eulerian.
\end{proof}

It will be convenient to color each edge of $G_l$ either red or blue according to the following rule:

\medskip

\begin{definition}
We define an edge in $G_l$ to be \textbf{red} if its last bit is $0$ and \textbf{blue} if its last bit is $1$. A subgraph of $G_l$ is \textbf{balanced} if it contains an equal number of red and blue edges.
\end{definition}

\medskip

\begin{lemma}\label{redblue}
Each vertex $v \in V(G_l)$ has one red and one blue out-edge, and the two in-edges of $v$ have the same color.
\end{lemma}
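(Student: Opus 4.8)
The plan is to read off both claims directly from the explicit description of the in- and out-edges of a vertex already recorded in the proof of Lemma~\ref{inoutdeg}, together with the red/blue coloring rule (an edge is red iff its last bit is $0$).

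First I would fix a vertex $v = b_0b_1\cdots b_{l-2} \in V(G_l)$ and recall that its two out-edges are $b_0b_1\cdots b_{l-2}0$ and $b_0b_1\cdots b_{l-2}1$. The last bit of the first is $0$ and the last bit of the second is $1$, so by definition one of these edges is red and the other is blue; this gives the first assertion. For the second assertion, I would recall that the two in-edges of $v$ are $0b_0b_1\cdots b_{l-2}$ and $1b_0b_1\cdots b_{l-2}$. In both cases the last bit is $b_{l-2}$, so the two in-edges have the same color: both are red if $b_{l-2}=0$ and both are blue if $b_{l-2}=1$.

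There is no real obstacle here; the statement is essentially a bookkeeping consequence of how edges of $G_l$ are labeled, and the only thing to be careful about is matching "last bit" in the coloring rule against the correct bit positions in the edge labels of in-edges versus out-edges. I would keep the write-up to a couple of sentences.
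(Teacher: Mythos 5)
Your proposal is correct and follows exactly the paper's own argument: identify the two out-edges $b_0\cdots b_{l-2}0$ and $b_0\cdots b_{l-2}1$ (one red, one blue by the last-bit rule) and observe that both in-edges $0b_0\cdots b_{l-2}$ and $1b_0\cdots b_{l-2}$ end in the same bit $b_{l-2}$, hence share a color. No differences worth noting.
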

\begin{proof}
Let $v=b_0b_1\cdots b_{l-2}$ be a vertex in $G_l$. Then by definition, the out-edges $b_0b_1\cdots b_{l-2}0$ and $b_0b_1\cdots b_{l-2}1$ are red and blue, respectively. The in-edges $0b_0b_1\cdots b_{l-2}$ and $1b_0b_1\cdots b_{l-2}$ must end with the same bit, and are thus of the same color.
\end{proof}

Recall that a \textbf{circuit} in a graph is a path which begins and ends at the same vertex, and a \textbf{cycle} is a circuit in which no vertex (other than the initial and final one) is repeated.

By symmetry, the deBruijn graph $G_l$ itself is balanced for all $l \geqslant 2$. We are interested in certain balanced subgraphs of $G_l$ because of the following simple observation: 
% The following lemmas will pave the way to the proof of our main theorem.

\medskip

\begin{lemma}\label{sev}
There is a one-to-one correspondence between balanced circuits of length $n$ in $G_l$ and balanced generalized de Bruijn sequences with parameters $(n,l,1)$.
\end{lemma}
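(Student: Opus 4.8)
The plan is to write down explicit maps in both directions and verify that they are mutually inverse. Given a balanced generalized de Bruijn sequence with parameters $(n,l,1)$, represented by the cyclic bit string $c_0c_1\cdots c_{n-1}$ with indices read modulo $n$, I would send it to the list of its $n$ length-$l$ substrings $e_i := c_ic_{i+1}\cdots c_{i+l-1}$, $i=0,1,\ldots,n-1$. Each $e_i$ is an element of $\{0,1\}^l$ and hence an edge of $G_l$; by the definition of $G_l$, the edge $e_i$ runs from $c_i\cdots c_{i+l-2}$ to $c_{i+1}\cdots c_{i+l-1}$, which is exactly the initial vertex of $e_{i+1}$. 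Therefore $e_0,e_1,\ldots,e_{n-1},e_0$ is a closed walk of length $n$ in $G_l$.

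Next I would match up the remaining conditions. The hypothesis that each length-$l$ substring occurs at most once says precisely that $e_0,\ldots,e_{n-1}$ are pairwise distinct, i.e., that the closed walk is a circuit. Moreover $e_i$ is red exactly when its last bit $c_{i+l-1}$ is $0$, and since $i\mapsto i+l-1$ is a bijection of $\mathbb{Z}/n\mathbb{Z}$, the number of red edges of the circuit equals the number of indices $j$ with $c_j=0$, i.e., the number of $0$s in the string; similarly for blue edges and $1$s. Hence the string is balanced if and only if the circuit is balanced, so the map takes balanced generalized de Bruijn sequences with parameters $(n,l,1)$ to balanced circuits of length $n$ in $G_l$.

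For the inverse, given a balanced circuit $e_0,e_1,\ldots,e_{n-1},e_0$ of length $n$, write $e_i=b^{(i)}_0b^{(i)}_1\cdots b^{(i)}_{l-1}$ and set $c_i:=b^{(i)}_0$ for $i\in\mathbb{Z}/n\mathbb{Z}$. That the terminal vertex of $e_i$ equals the initial vertex of $e_{i+1}$ gives $b^{(i)}_{j+1}=b^{(i+1)}_j$ for $0\leqslant j\leqslant l-2$; iterating this relation around the closed walk yields $b^{(i)}_j=b^{(i+j)}_0=c_{i+j}$ for all $0\leqslant j\leqslant l-1$, so that $e_i=c_ic_{i+1}\cdots c_{i+l-1}$. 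Thus the cyclic string $c_0c_1\cdots c_{n-1}$ has the $e_i$ as its length-$l$ substrings; these being pairwise distinct and colour-balanced, it is a balanced generalized de Bruijn sequence with parameters $(n,l,1)$, and the displayed formulas show at once that the two maps compose to the identity in both orders.

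The content is essentially bookkeeping; the one place to be careful is the iteration $b^{(i)}_j=c_{i+j}$, where one must check that chasing the overlap relation all the way around the cycle introduces no inconsistency — this is exactly what being a closed walk, rather than an open one, guarantees. I would also note that both sides of the correspondence are implicitly considered up to cyclic rotation — a cyclic string has no preferred starting bit and a circuit no preferred starting edge — and that the bijection respects this identification.
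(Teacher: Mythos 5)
Your proposal is correct and follows essentially the same approach as the paper: identifying the $n$ edges of the circuit with the $n$ cyclic length-$l$ substrings of the sequence, and matching distinctness with the $k=1$ condition and edge colours with bit counts. You are in fact more thorough than the paper, which only spells out the direction from circuits to sequences; your explicit inverse map and the remark about consistency of the overlap relation around the closed walk are welcome but not essential additions.
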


\begin{proof}
Let the edges of the circuit of length $n$ be denoted as: 
\[b_0b_1\cdots b_{l-1} \to b_1b_2\cdots b_l \to \cdots \to b_{n-1}b_0\cdots b_{l-2} . \]
The $l$-bit substrings of the $n$-bit sequence $b_0b_1\cdots b_{n-1}$ precisely correspond to the edges of the circuit of length $n$. Thus, as the edges on the circuit are distinct, so are the $l$-bit substrings. Moreover, since the number of red and blue edges in the circuit are equal, the number of $0$s and $1$s in the string are equal as well. Hence, the $n$-bit string is a balanced generalized de Bruijn sequence with parameters $(n,l,1)$.
\end{proof}

\begin{lemma}\label{cycle}
If the graph $G_l$ has a balanced circuit of length $n$, then the graph $G_{l+1}$ has a balanced cycle of length $n$.
\end{lemma}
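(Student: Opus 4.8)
The plan is to exploit the fact that $G_{l+1}$ is naturally the \emph{line graph} of $G_l$. Indeed, a vertex of $G_{l+1}$ is an $l$-bit string, i.e.\ an edge of $G_l$, and an edge $b_0b_1\cdots b_l$ of $G_{l+1}$ joins the vertex $b_0\cdots b_{l-1}$ to the vertex $b_1\cdots b_l$; regarded as edges of $G_l$, these are exactly two edges that can appear consecutively in a walk, since the head of $b_0\cdots b_{l-1}$ and the tail of $b_1\cdots b_l$ are both equal to $b_1\cdots b_{l-1}$. So a pair of consecutive edges in any walk in $G_l$ determines a single edge of $G_{l+1}$, and conversely.

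First I would take a balanced circuit $C$ in $G_l$ of length $n$ and write its edge sequence cyclically as $e_0, e_1, \ldots, e_{n-1}$ (with $e_{n-1}$ followed by $e_0$). Since $C$ is a circuit, the $e_i$ are pairwise distinct edges of $G_l$, hence pairwise distinct vertices of $G_{l+1}$. By the observation above, $e_0, e_1, \ldots, e_{n-1}, e_0$ is a closed walk $W$ in $G_{l+1}$ of length $n$; and because no vertex of $G_{l+1}$ is repeated along $W$ (other than the initial and final one), $W$ is in fact a cycle of length $n$ in $G_{l+1}$.

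It then remains to verify that $W$ is balanced. Here I would note that if $e_i = b_0b_1\cdots b_{l-1}$ and $e_{i+1} = b_1b_2\cdots b_l$, then the edge of $G_{l+1}$ traversed from $e_i$ to $e_{i+1}$ is labeled $b_0b_1\cdots b_l$, whose final bit is $b_l$, which is precisely the final bit of $e_{i+1}$. Hence the color in $G_{l+1}$ of the $i$-th edge of $W$ equals the color in $G_l$ of the edge $e_{i+1}$ of $C$. Letting $i$ run over $\mathbb{Z}/n\mathbb{Z}$, the list of edge-colors of $W$ is just a cyclic shift of the list of edge-colors of $C$, so $W$ has the same number of red edges and the same number of blue edges as $C$. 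Since $C$ is balanced, so is $W$, which proves the lemma.

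As for the main obstacle: there really is no substantial one, as the whole argument is bookkeeping in the line-graph dictionary between $G_l$ and $G_{l+1}$. The two points that need care are (a) using that a circuit has no repeated \emph{edges} — this is exactly what upgrades the closed walk $W$ to a genuine cycle (no repeated vertices) in $G_{l+1}$ — and (b) correctly matching the red/blue coloring under the identification, namely observing that the last bit of the $(l+1)$-bit edge label of $G_{l+1}$ coincides with the last bit of the later of its two $l$-bit endpoint labels.
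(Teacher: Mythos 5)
Your proposal is correct and follows essentially the same route as the paper: both identify edges of $G_l$ with vertices of $G_{l+1}$ (the line-graph dictionary), use distinctness of the circuit's edges to get a genuine cycle, and match colors by tracking last bits. Your cyclic-shift phrasing of the balance argument is just a slight repackaging of the paper's observation that the last bits of the new cycle's edges are again $b_0,\dots,b_{n-1}$.
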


\begin{proof}
The edges of $G_l$ and vertices of $G_{l+1}$ have the same labelling. This gives us a natural bijection between the edges of $G_l$ and the vertices of $G_{l+1}$. Let $e_1,e_2 \in G_l$ correspond to $v_1,v_2 \in G_{l+1}$ under this bijection. It follows from the definition of a de Bruijn graph that the head of $e_1$ is the tip of $e_2$ in $G_l$ if and only if there is an edge connecting $v_1$ to $v_2$ in $G_{l+1}$. Now, consider the edges of the circuit of length $n$ in $G_l$:
\[b_0b_1\cdots b_{l-1} \to b_1b_2\cdots b_l \to \cdots \to b_{n-1}b_0\cdots b_{l-2} . \]
We consider these edges as the vertices of a walk in $G_{l+1}$. As all the edges of the circuit are distinct, the walk in $G_{l+1}$ contains distinct vertices. Thus, the bijection maps a circuit of length $n$ in $G_l$ to an $n$-cycle in $G_{l+1}$. Moreover, since the circuit is balanced, there are equal number of $0$s and $1$s among $b_0, b_1, \cdots, b_{n-1}$. These will be the last bits in the edges of the cycle in $G_{l+1}$ as well, and hence, the cycle is balanced.
\end{proof}

\begin{lemma}\label{nin}
Let $l \geqslant 2$, and let $n$ be an even positive integer at most $2^l$. Then the graph $G_l$ contains a balanced circuit of length $n$.
\end{lemma}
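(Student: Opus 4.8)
The plan is to induct on $l$. The base case $l=2$ is checked directly: the circuit $0\to 1\to 0$ in $G_2$ traverses the blue edge $01$ and the red edge $10$, hence is a balanced circuit of length $2$, while $G_2$ itself is Eulerian and balanced by symmetry (it has two red and two blue edges), giving a balanced circuit of length $4=2^2$.

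For the inductive step, assume $G_{l-1}$ contains a balanced circuit of every even length $n\leqslant 2^{l-1}$. When $2\leqslant n\leqslant 2^{l-1}$, Lemma~\ref{cycle} promotes such a circuit in $G_{l-1}$ to a balanced cycle — in particular a balanced circuit — of length $n$ in $G_l$, which handles the ``lower range''. It remains to build, for each even $n$ with $2^{l-1}<n\leqslant 2^l$, a balanced circuit of length $n$ in $G_l$. If $n=2^l$ we take an Eulerian circuit of $G_l$, balanced by symmetry. If $2^{l-1}<n<2^l$, set $m=2^l-n-2$; then $m$ is even with $0\leqslant m\leqslant 2^{l-1}-4$, so by the inductive hypothesis together with Lemma~\ref{cycle} there is a balanced cycle $D$ of length $m$ in $G_l$ (take $D$ empty if $m=0$). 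Let $H$ be the subgraph of $G_l$ obtained by deleting the two loops $0^l$, $1^l$ and all edges of $D$. Then $H$ has exactly $2^l-(m+2)=n$ edges; it is an even digraph, since we deleted a disjoint union of directed cycles (the two loops together with $D$); and it is balanced, since the two loops contribute one red and one blue edge while $D$ is balanced. Consequently, if $H$ becomes connected after discarding any isolated vertices, then it is Eulerian, and an Eulerian circuit of it is the balanced circuit of length $n$ that we seek.

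Everything thus comes down to the connectivity of $H$. First, the only vertices that can end up isolated are $0^{l-1}$ and $1^{l-1}$: no other vertex carries a loop, and the simple cycle $D$ removes at most one incoming and one outgoing edge at any vertex, so every other vertex retains an edge of each type. Suppose, for contradiction, that $H$ with its isolated vertices removed were disconnected, say its vertex set splits into nonempty sets $S$ and $\bar S$ with no edge of $H$ running between them. Then every edge of $G_l$ between $S$ and $\bar S$ (in either direction) was deleted; since a loop joins no two distinct vertices, all such edges lie on the simple cycle $D$, so there are at most $m\leqslant 2^{l-1}-4$ of them. This must be ruled out, and here the de Bruijn adjacency rule should be invoked: since $N^{+}(b_0\cdots b_{l-2})=\{b_1\cdots b_{l-2}0,\ b_1\cdots b_{l-2}1\}$, every nonempty vertex subset $T$ satisfies $|N^{+}(T)|\geqslant|T|$, with strictly more expansion once $T$ is bounded away from $\emptyset$ and $V(G_l)$; combining this with the forced local behaviour near the loop vertices (the edges $10^{l-1},\,0^{l-1}1$ and $01^{l-1},\,1^{l-1}0$ survive in $H$ unless $D$ runs through $0^{l-1}$ or $1^{l-1}$, in which case that vertex is isolated rather than separating) should show that no such small cut exists.

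I expect this last point — that deleting the two loops and one short cycle can never disconnect $G_l$ into two nontrivial pieces — to be the main obstacle, exactly because of the anomalous low-degree behaviour of $G_l$ at $0^{l-1}$ and $1^{l-1}$, where the removed loops sit. An alternative would be to establish the connectivity of $H$ by hand: for any two of its vertices, exhibit a walk of length $O(l)$ between them that avoids $D$ and both loops, treating separately any walk that would pass through $0^{l-1}$ or $1^{l-1}$.
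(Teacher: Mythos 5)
Your reduction is sound up to one point: after deleting the two loops and the balanced cycle $D$ you correctly obtain a balanced subgraph $H$ with $n$ edges in which every vertex has equal in- and out-degree, so the whole proof rests on $H$ (minus isolated vertices) being connected. That is exactly the step you do not prove, and it is not a routine verification. Deleting a directed cycle from an Eulerian digraph can in general disconnect it, and $G_l$ has quite small edge cuts (for instance the four edges separating $\{0^{l-1},0^{l-2}1\}$ from the rest), so the expansion inequality $|N^{+}(T)|\geqslant |T|$ you invoke is nowhere near enough by itself; you would have to show additionally that no cut of $G_l$ can be entirely covered by the two loops together with a single short cycle of the specific form produced by Lemma~\ref{cycle}. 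As written, the phrases ``should show that no such small cut exists'' and the proposed ``alternative'' are placeholders for the hardest part of the lemma, so the proof is incomplete.

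The paper sidesteps connectivity altogether. It deletes only a balanced cycle $H$ of length $2^{l+1}-n$ from $G_{l+1}$ (no loops are treated specially), observes that the complement is balanced with every component Eulerian, and then \emph{repairs} disconnectedness rather than excluding it: if an edge $e$ of the deleted cycle joins two components, the de Bruijn adjacency forces a companion edge $e'$ of the deleted cycle in the opposite direction between neighbouring vertices, and Lemma~\ref{redblue} shows that swapping the pair $\{e_1,e_2\}$ of complement edges for $\{e,e'\}$ preserves all in- and out-degrees and the red/blue count while strictly decreasing the number of components. Iterating this exchange terminates in a connected balanced even subgraph with $n$ edges, i.e.\ the desired balanced circuit. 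If you want to salvage your version, the cleanest fix is to import this swap argument in place of the connectivity claim; otherwise you must supply a genuine proof that $G_l$ minus the two loops and $D$ cannot split into two nontrivial pieces, which I do not see how to do by counting alone.
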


\begin{proof}
We proceed by induction on $l$. First, for $l=2$, we can either have $n=2$ or $n=4$. We may take the circuits $0 \to 1$ and $0 \to 1 \to 1 \to 0$, respectively. 

Now, assume that the claim holds true for $l$. We prove it holds true for $l+1$. For even $n \leqslant 2^l$, we may use the induction hypothesis to find a balanced circuit of length $n$ in $G_l$. From Lemma \ref{cycle}, there exists a balanced cycle (also a circuit) of length $n$ in $G_{l+1}$. 

We now show the claim for $2^l < n < 2^{l+1}$ where $n$ is even. Since $2 \leqslant 2^{l+1}-n < 2^l$, there exists a balanced $n$-cycle in $G_{l+1}$, say $H$. Then the graph $G_{l+1}-H$ contains $n$ edges. Assume that $G_{l+1}-H = H_1 \cup H_2 \cup \cdots \cup H_t$, where each $H_i$ for $1 \leqslant i \leqslant t$ is a component of $G_{l+1}-H$. Since both $G_{l+1}$ and $H$ are balanced and Eulerian, $G_{l+1}-H$ must be balanced and each $H_i$ must be Eulerian.

If $G_{l+1}-H$ is connected, we are done, since it is a circuit in $G_{l+1}$ of length $n$. Otherwise, since $G_{l+1}$ is connected, there exists an edge $e$ in $H$ which connects vertices in two different components of $G_{l+1}-H$, say $H_1$ and $H_2$. Let the edge $e$ go from $v_1 \in V(H_1)$ to $v_2 \in V(H_2)$. Without loss of generality, assume $e$ is red. Now, let $e_1$ be an edge from $v_1$ to $u_1$ in $H_1$, and let $e_2$ be an edge from $u_2$ to $v_2$ in $H_2$. Then the last $l-1$ bits of $v_1$ and $u_2$ are the first $l-1$ bits of $v_2$ and $u_1$. Hence, there must be an edge $e'$ from $u_2$ to $u_1$ in $H$. We deduce from Lemma \ref{redblue} that since $e$ is red, $e_1$ must be blue, $e_2$ must be red, and $e'$ must be blue.

Consider the new subgraph obtained by replacing $e_1$ and $e_2$ with $e$ and $e'$. One checks easily that the degrees of all vertices, as well as the number of red and blue edges, are preserved. Moreover, the number of connected components has been reduced. Thus, we end up with a balanced subgraph with $n$ edges and fewer components, all of which are Eulerian. By repeating this process, we will eventually end up with a balanced subgraph of $n$ edges which is a circuit. Finally, for $n=2^{l+1}$, all of the edges of the graph form a circuit since $G_{l+1}$ is Eulerian. The result follows.
\end{proof}

With these preliminaries out of the way, we now turn to the proof of our main result:
% Theorem~\ref{maintheorem}:

\medskip

% \begin{theorem}
% For any positive integers $n,l,k$, a balanced generalized de Bruijn sequence with parameters $(n,l,k)$ exists if and only if $n$ is even % and $k \geqslant \frac{n}{2^l}$.
% \end{theorem}

\begin{proof}[Proof of Theorem~\ref{maintheorem}]
$(\Rightarrow)$ Since a balanced generalized de Bruijn sequence has an equal number of $0$s and $1$s, $n$ must be even. Moreover, by the Pigeonhole Principle, we must have $k \geqslant \frac{n}{2^l}$, since there are only $2^l$ possible $l$-bit substrings and $n$ total substrings.\medskip

$(\Leftarrow)$ Now, we show that these constraints are sufficient for a balanced generalized de Bruijn sequence to exist. We may assume that $k = \lceil \frac{n}{2^l} \rceil$ since a balanced generalized de Bruijn sequence with parameters $(n,l,k)$ is also one with parameters $(n,l,k')$ where $k'>k$. \medskip

For $l=1$, the claim is trivial since the substrings are simply bits, and we may use any balanced string of length $n$. Now, let $l \geqslant 2$. We will now prove the claim by induction on $k$. The base case $k=1$ follows from Lemmas \ref{sev} and \ref{nin}. It suffices to show that the existence of a balanced generalized de Bruijn sequence with parameters $(n,l,k)$ implies the existence of one with parameters $(n+2^l,l,k+1)$. Let $b_0b_1\cdots b_{n-1}$ be a balanced generalized de Bruijn sequence with parameters $(n,l,k)$, and let $b_0'b_1'\cdots b_{2^l-1}'$ be one with parameters $(2^l,l,1)$. The latter contains every $l$-bit string as a substring exactly once. We may therefore assume that $b_0'b_1'\cdots b_{l-1}' = b_0b_1\cdots b_{l-1}$. Thus
\[b_0b_1\cdots b_{n-1}b_0'b_1'\cdots b_{l-1}'\]
is a balanced sequence in which each substring of length $l$ appears at most $k$ times in the first part of the sequence and exactly once in the second part. It is therefore a balanced generalized de Bruijn sequence with parameters $(n+2^l,l,k+1)$. The theorem follows by induction.
\end{proof}

\begin{remark}
Theorem~\ref{maintheorem} can be generalized as follows.
Say that a (finite) binary sequence is \textbf{almost-balanced} if the number of 0's and the number of 1's differ by \textit{at most one}. Since the complement of an almost-balanced sequence is also almost-balanced, the same argument used to prove Theorem~\ref{maintheorem} yields the following:
\end{remark}

\begin{theorem}\label{genmainresult}
Given positive integers $n,l$, and $k$, an almost-balanced generalized de Bruijn sequence with parameters $(n,l,k)$ exists if and only if $k \geqslant \frac{n}{2^l}$.
\end{theorem}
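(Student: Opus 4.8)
\textbf{Proof proposal for Theorem~\ref{genmainresult}.}
The plan is to mirror the proof of Theorem~\ref{maintheorem}, modifying only those steps where the parity of $n$ and the exact balance condition were used. The necessity direction is the easier half: the Pigeonhole bound $k \geqslant \frac{n}{2^l}$ does not use balance at all, and there is no longer any parity obstruction to remove, so $(\Rightarrow)$ is immediate. For sufficiency, I would again reduce to $k = \lceil \frac{n}{2^l} \rceil$ and treat $l = 1$ separately (any almost-balanced string of length $n$ works), so the real content is the case $l \geqslant 2$ with an induction on $k$ whose base case $k = 1$ asserts: for every $l \geqslant 2$ and every positive integer $n \leqslant 2^l$ (now with no parity restriction), the graph $G_l$ contains an almost-balanced circuit of length $n$, i.e.\ one whose red and blue edge counts differ by at most one. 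Via the analogue of Lemma~\ref{sev} this yields an almost-balanced generalized de Bruijn sequence with parameters $(n,l,1)$.

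The key step is therefore an almost-balanced version of Lemma~\ref{nin}. For even $n$ this is already Lemma~\ref{nin}, so only odd $n$ with $1 \leqslant n \leqslant 2^l$ is new. Here I would run the same induction on $l$ used in Lemma~\ref{nin}: for $l = 2$ one exhibits circuits of length $1$ ($0 \to 0$) and $3$ ($0 \to 0 \to 1 \to 0$) by hand; for the inductive step, one pushes an almost-balanced circuit of length $n$ in $G_l$ up to an almost-balanced $n$-cycle in $G_{l+1}$ using Lemma~\ref{cycle} (which only relabels last bits and so preserves the discrepancy exactly), handling $n \leqslant 2^l$ directly, and for $2^l < n < 2^{l+1}$ taking the complement of an almost-balanced cycle of length $2^{l+1} - n$. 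The complement $G_{l+1} - H$ of an almost-balanced Eulerian $H$ is Eulerian with $n$ edges and has discrepancy equal to $\pm$ that of $H$ (since $G_{l+1}$ is exactly balanced), hence is itself almost-balanced; the component-merging argument — replacing $e_1, e_2$ by $e, e'$ — carries over verbatim because Lemma~\ref{redblue} forces $\{e, e'\}$ and $\{e_1, e_2\}$ to contain one red and one blue edge each, so the red/blue counts (and thus the discrepancy) are unchanged at each merge.

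For the induction on $k$, the inductive step is the concatenation trick, and the only point needing care is that gluing an almost-balanced sequence of length $n$ to a (necessarily balanced, since $2^l$ is even) de Bruijn sequence with parameters $(2^l, l, 1)$ — after rotating the latter so its first $l-1$ bits agree with the first $l-1$ bits of the former — yields a sequence of length $n + 2^l$ that is still almost-balanced and in which every $l$-substring occurs at most $k+1$ times. Since $2^l$ is even, the discrepancy of the concatenation equals that of the length-$n$ sequence, so almost-balancedness is preserved, and $n + 2^l$ ranges over exactly the integers that are $\lceil \cdot \rceil = k+1$.

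I expect the main obstacle to be the base case $k=1$ for odd $n$, specifically checking that the complement/merging argument of Lemma~\ref{nin} really does preserve \emph{almost}-balancedness rather than exact balance: one must verify that ``discrepancy of $G_{l+1} - H$ $=$ discrepancy of $H$'' when $G_{l+1}$ is balanced, and that the local swap $\{e_1,e_2\} \leftrightarrow \{e,e'\}$ leaves this discrepancy fixed — both of which follow from the fact that in each of these two-element edge sets exactly one edge is red and one is blue (the former by the complementation identity red$(G_{l+1}-H)$ $+$ red$(H) = $ red$(G_{l+1}) = 2^l/2 = $ blue$(G_{l+1})$, the latter directly from Lemma~\ref{redblue}). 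The remark in the excerpt already signals that ``the same argument'' works because complements of almost-balanced sequences are almost-balanced; the proof is essentially an observation that none of Lemmas~\ref{inoutdeg}--\ref{nin} used evenness in an essential way once ``balanced'' is relaxed to ``almost-balanced'' and the odd base cases in $G_2$ are supplied.
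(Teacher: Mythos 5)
Your proposal is correct and follows essentially the same route as the paper, which proves Theorem~\ref{genmainresult} only by the one-line remark that ``the same argument'' as for Theorem~\ref{maintheorem} applies. You have accurately identified and verified the only places where parity or exact balance entered (the odd-length base cases in $G_2$, the fact that Lemma~\ref{cycle} and the $\{e_1,e_2\}\leftrightarrow\{e,e'\}$ swap preserve the red/blue discrepancy, that complementing inside the exactly balanced $G_{l+1}$ preserves it up to sign, and that concatenating with the exactly balanced $(2^l,l,1)$ sequence preserves it), so your write-up is a faithful expansion of the paper's sketch.
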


\section{Motivation} \label{sec:motivation}

% {\bf Trick \#1.}

One of our motivations for introducing balanced generalized de Bruijn sequences comes from the following card trick, a 32-card version of which is described on p.18 of \cite{Diaconis-Graham}. The 52-card variant described here is alluded to on p.29 of {\em loc. cit.}

\medskip

{\bf Effect and presentation.}

A regular deck of 52 cards, secured with rubber bands in its case, is tossed out into the audience. The deck is tossed around a few times until finally one audience member is instructed to remove the rubber bands, open the case, and give the deck a cut at some random position. This audience member takes the top card and then passes it to another spectator who removes the current top card. This is repeated until five members of the audience have taken cards. The performer instructs the five participants who have taken cards to look at their card, make a mental picture of it, and try to transmit an image of their card to the performer. The performer says, ``You're all doing great, but some of your signals are crossing and I can't quite make everything out. Would everyone who has a red card please stand up and try extra hard to project the mental image of their card?'' Turning to the first participant that took a card, the performer says, ``I'm having trouble making out the suit of your card\ldots it's not a heart, is it?'' The participant shakes his head no. ``No, I didn't think so\ldots no, it's a diamond. It's the 9 of Diamonds!'' The participant nods in agreement. Without asking any more questions, the performer goes on to identify the exact card that each of the other four participants is thinking of.

\medskip

{\bf Method and explanation.}

The trick is based on a balanced generalized de Bruijn sequence with parameters $(52,5,2)$, for example the following one:
\[
0000011101010010001011001101111100000101101111101001
\]

Each 5-bit string corresponds to either one or two cards in a standard deck of 52 cards; the color of the corresponding card is red if the first bit is a 0 and black if the first bit is a 1. 

When the five spectators signal the color of their respective cards, this produces a 5-bit string (0 = red, 1 = black, going in order from spectator 1 to spectator 5). The magician looks up this 5-bit string in the following table:
\[
\begin{array}{llllllll}
00000: & A\heartsuit, A\diamondsuit  & 01000: & 10\diamondsuit,  K\heartsuit & 10000: & 4\clubsuit,4\spadesuit & 11000: & 7\spadesuit \\
00001: & 7\heartsuit, 7\diamondsuit  & 01001: & K\diamondsuit, 5\heartsuit & 10001: & 6\clubsuit & 11001: & J\spadesuit \\
00010: & 6\heartsuit, 6\diamondsuit  & 01010: & 2\heartsuit & 10010: & 3\clubsuit,A\spadesuit & 11010: & 8\spadesuit, J\clubsuit \\
00011: & 3\diamondsuit & 01011: & 9\heartsuit, 9\diamondsuit & 10011: & 5\clubsuit & 11011: & A\clubsuit,2\spadesuit \\
00100: & 5\diamondsuit, 10\heartsuit & 01100: & J\heartsuit & 10100: & 7\clubsuit, 3\spadesuit & 11100: & 10\clubsuit \\
00101: & 8\diamondsuit, 8\heartsuit & 01101: & Q\heartsuit,4\diamondsuit & 10101: & 10\spadesuit  & 11101: & K\spadesuit, 8\clubsuit \\
00110: & 3\heartsuit & 01110: & 2\diamondsuit & 10110: & Q\clubsuit, Q\spadesuit & 11110: & 9\spadesuit, K\clubsuit \\
00111: & Q\diamondsuit & 01111: & 4\heartsuit,J\diamondsuit & 10111: & 6\spadesuit, 2\clubsuit & 11111: & 5\spadesuit, 9\clubsuit \\
\end{array}
\]

If there are two possibilities listed, for example 9H or 9D, the magician feints hesitation, saying (in this example) to the first spectator, ``I'm having trouble seeing the suit\ldots it's not a heart, is it?'' If the spectator says yes, the magician responds, ``Yes, I thought so! It's the 9 of Hearts!'' If the spectator says no, the magician responds, ``No, I didn't think so\ldots it's a diamond. It's the 9 of Diamonds!''

The magician now knows the first spectator's card, and is able to reveal all of the other selections by finding the first spectator's card in the following list (the next four cards -- wrapping around cyclically, if necessary -- are, in that order, the cards which the next four spectators are holding):
\[
\begin{aligned}
A\heartsuit \; 7\heartsuit \; 3\diamondsuit \; Q\diamondsuit \; 2\diamondsuit \; K\spadesuit \; 8\spadesuit \; 10\spadesuit \; 2\heartsuit \; 7\clubsuit \; K\diamondsuit \; 3\clubsuit \; 5\diamondsuit \; 10\diamondsuit \; 6\clubsuit \; 6\heartsuit \; 8\diamondsuit \; 9\heartsuit \; Q\clubsuit \; J\heartsuit \; J\spadesuit \; 5\clubsuit \; 3\heartsuit \; Q\heartsuit \; A\clubsuit \; 2\clubsuit \\
4\diamondsuit \; 5\spadesuit \; 9\spadesuit \; 10\clubsuit \; 7\spadesuit \; 4\clubsuit \; A\diamondsuit \; 7\diamondsuit \; 6\diamondsuit \; 8\heartsuit \; 9\diamondsuit \; Q\spadesuit \; 4\heartsuit\; 2\spadesuit \; 6\spadesuit \; J\diamondsuit \; 9\clubsuit \; K\clubsuit \; 8\clubsuit \; J\clubsuit \;  \; 3\spadesuit \; 5\heartsuit \; A\spadesuit \; 10\heartsuit \; K\heartsuit \; 4\spadesuit \\ 
\end{aligned}
\]

To disguise the glimpsing of the necessary information, once the spectators have signaled the color of their cards, grab a clipboard and a pen and say that you're going to try to write down the exact cards held by each of the five participants. Unbeknownst to the participants, on the clipboard you have a crib sheet containing the above information.

\newpage

\section{Some open problems}

% {\bf List some more open problems here}

\begin{enumerate}
    \item In lieu of binary bits, consider an alphabet $A$ of size $m$ where $m>2.$ Does a suitable analogue of Theorem~\ref{maintheorem} still hold? That is, given positive integers $n,l,k,$ and $m$, does an $A$-balanced generalized deBruijn sequence of length $n$ exist if and only if $m$ divides $n$ and $k\geq \frac{n}{m^l}$?
    As considering the complement of such a sequence no longer suffices, the induction step used to prove Theorem~\ref{maintheorem} does not apply in this more general context.
    \item Suppose $n$ is even and $k \geqslant \frac{n}{2^l}$. How many balanced generalized de Bruijn sequences with parameters $(n,l,k)$ are there? We know there is at least 1, and when  $(n,l,k) = (2^m,m,1)$ there are precisely $2^{2^{m-1} - m}$ such sequences. Is there an easily described general formula -- at least in some special cases?
    \item Are there algebraic `shift register' type constructions that would permit the performer to do the $(52,5,2)$ trick described in Section 3 without mnemonics or a list? The trick described on p.18 of \cite{Diaconis-Graham} (with parameters $(32,5,1)$) has this property. For analogous constructions related to a 6-card version of our $(52,5,2)$ trick, see \cite{Lawson-Perfect} or \cite{Song}.
\end{enumerate}
%%%%%%%%%%%%%%%%%%%%%%%%%%%%%%%%%%%%%%%%%%%%%%%%%%%%%%%%%%%%%%%%%%%%%%

\bibliographystyle{alpha}
\bibliography{references}

%%%%%%%%%%%%%%%%%%%%%%%%%%%%%%%%%%%%%%%%%%%%%%%%%%%%%%%%%%%%%%%%%%%%%%
\end{document}